\newcommand{\R}{\mathbb{R}}
\newtheorem{theorem}{Theorem}
\newtheorem{proposition}[theorem]{Proposition}
\newtheorem{lemma}[theorem]{Lemma}
\def\O{{\rm O}}
\def \R{{\rm I\!R}}
\def\cal{\mathcal}
\title{Bounded orbits for 3 bodies in $\mathbb{R}^4$}
\author{Alain Albouy, Holger R.~Dullin} % , RM, ...}
\begin{document}

\begin{abstract}
We consider the Newtonian 3-body problem in dimension 4, and fix a value of the angular momentum which is compatible with this dimension. We show that the energy function cannot tend to its infimum on an unbounded sequence of states. Consequently the infimum of the energy is its minimum. This completes our previous work \cite{AD19} on the existence of Lyapunov stable relative periodic orbits in the 3-body problem in $\mathbb{R}^4$.
\end{abstract}

\maketitle

% \section{Critical points at infinity of the 3-body problem in $\R^4$}
\section{Introduction}

This work aims to continue our previous work \cite{AD19} on the 3-body problem in $\R^4$. There we started from a rather complete description of the configurations of the relative equilibria, which form three curves in the space of triangular shapes. We described their embedding in the phase space, and in particular, the three curves they draw in the energy versus angular momentum diagram. We noticed a cusp on two of the curves, and an interesting connection of the third curve with a fourth curve corresponding to the equilateral relative equilibria. We gave some rigorous results and proposed some difficult conjectures.

The relative equilibria are responsible for changes of topology of the integral manifold. Smale \cite{smale70topology} insisted that changes of topology may also occur at infinity. In this sense, our four curves in the energy versus angular momentum diagram are indicating changes of topology. They must be completed by other curves which indicate the changes of topology which occur at infinity. Different arguments allow us to infer that there are four new curves, and to propose their equation. We complete our diagrams by drawing these conjectural curves. We notice the asymptotic coincidence of three of the new curves and the first three curves of relative equilibria when the energy $H\to -\infty$. This coincidence is easily deduced from the expansions in \cite{DS19}. The fourth curve is $H=0$.
% [DS] H.R. Dullin and J. Scheurle, Symmetry reduction of the 3-body problem in $\R^4$, {\it J.\ Geom.\ Mech.}, 12 (2020), pp.\ 377--394. 
In a future work we will present some more statements and proofs about these ``critical points at infinity''.

As Richard Montgomery warned us, the proof of our most surprising claim in \cite{AD19}, namely {\it For any value of the angular momentum and the masses, there exists in the reduced problem a Lyapunov stable relative equilibrium,} is incomplete. We proved that the energy is bounded below, and that the minimum provides such a stable equilibrium. But we did not prove that the minimum is realized. Here we fill this gap with a quite general argument which proves that a ``critical point at infinity'' cannot realize the infimum of the energy, since it may be approached by inferior values.

This general argument first proposes models for the infimum at infinity, and then proves that these models do not realize the infimum. We conjecture that the ``critical values at infinity'' of the energy are six values deduced from these models. In particular, when the energy is increased, the compact level sets surrounding the Lyapunov stable relative equilibria should lose their compactness. This should happen at one of the six values. The values are generically distinct. They form, when the angular momentum is varied, the three curves with $H<0$ which we add to the energy-momentum diagram.

\section{The infimum is not at infinity in the 4D 3-body problem}

Consider the 3-body problem. Let $q_1$, $q_2$, $q_3$ be the positions and $m_1$, $m_2$, $m_3$ be the masses. The bodies move in a Euclidean vector space $E$. Their center of mass is the origin. Here $E=\R^4$. The vector space $\bigwedge^2 E$ of bivectors or of antisymmetric matrices is also Euclidean. For a simple bivector $\pi=a\wedge b\in \bigwedge^2 E$, the length squared is $|\pi|^2=|a|^2|b|^2-\langle a,b\rangle^2$.

The angular momentum, expressed in terms of the two Jacobi vectors $q=q_2-q_1$ and $Q=q_3-(m_1q_1+m_2q_2)/(m_1+m_2)$ and two positive coefficients 
$$\mu=\frac{m_1m_2}{m_1+m_2},\quad \nu=\frac{m_3(m_1+m_2)}{m_1+m_2+m_3},$$
is $$L= q\wedge p+Q\wedge P,\quad\hbox{where}\quad p=\mu \dot q,\quad P=\nu\dot Q.$$ Recall that $L$ is of rank 4 if and only if $q,p, Q,P$ generate a 4-dimensional space. This means that the motion is not restricted to a 3-dimensional subspace of $E$. The eigenvalues of $L$ are $\pm i l_1, \pm i l_2$, where the two positive numbers $l_1$ and $l_2$ are such that in an orthonormal base $(e_1,e_2,e_3,e_4)$ we have $L=l_1 e_1\wedge e_2+l_2 e_3\wedge e_4$.
Let $d_{ij}=|q_i-q_j|$. The energy is
$$H=T+V,\quad\hbox{with}\quad T=\frac{|p|^2}{2\mu}+\frac{|P|^2}{2\nu}, \quad V=-\frac{m_1m_2}{ d_{12}}-\frac{m_2m_3}{ d_{23}}-\frac{m_1m_3}{ d_{13}}.$$
In \cite{AD19} we gave a short proof of:

\begin{proposition}\label{pro:bounded}
Given three positive masses $m_1, m_2, m_3$ and a $4\times 4$ antisymmetric matrix $L$ of rank 4, consider the 3-body problem in $\mathbb{R}^4$ with these masses, and consider in the phase space the submanifold of states with angular momentum $L$. On this submanifold the energy $H$ is bounded below.
\end{proposition}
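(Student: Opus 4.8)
The plan is to show that the energy $H = T + V$ is bounded below on the submanifold of states with angular momentum $L$ of rank 4. The strategy I would follow rests on two observations. First, the potential energy $V$ is always negative but its magnitude is controlled by the mutual distances $d_{ij}$; it can only be very negative when some pair of bodies is very close, i.e. when some $d_{ij}$ is small. Second, a large angular momentum forces a large kinetic energy when the configuration is small, and this is precisely the competition we must quantify. So the whole point is to trade the singularity of $V$ against the coercivity that the fixed rank-4 angular momentum imposes on $T$.

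First I would reduce $V$ to the Jacobi coordinates. Since each interparticle distance $d_{ij}$ is a fixed positive linear combination of $|q|$ and the components of $q$ and $Q$, the worst case for $V$ is when one distance goes to zero while the configuration shrinks. The key inequality I expect to need is a lower bound of the form $T \geq c\,(l_1^2 + l_2^2)/(|q|^2 + |Q|^2)$ for some positive constant $c$ depending on $\mu,\nu$. This follows from the structure $L = q\wedge p + Q\wedge P$: by Cauchy--Schwarz on bivectors, $|L|$ is bounded by a product of a configuration norm and a momentum norm, and since $|L|^2 = l_1^2 + l_2^2$ is fixed, a small configuration norm forces a large momentum norm, hence a large $T$.

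The crucial point, and the one I expect to be the main obstacle, is that the naive bound $|L| \leq |q\wedge p| + |Q\wedge P| \leq |q||p| + |Q||P|$ only controls $T$ in terms of the full configuration size, and does not prevent a single pair of bodies from colliding while the overall size stays bounded. When, say, $d_{12} = |q| \to 0$ but $|Q|$ stays bounded away from zero, the term $-m_1 m_2/d_{12}$ in $V$ diverges to $-\infty$, and one must check that the corresponding kinetic term $|p|^2/(2\mu)$ grows fast enough to compensate. Here the hypothesis that $L$ has \emph{rank 4} is essential: it guarantees $q, p, Q, P$ span a $4$-dimensional space, so $q\wedge p$ cannot degenerate freely, and in particular the bivector $q \wedge p$ must carry a definite share of $|L|$. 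I would quantify this by writing $|L|^2 = |q\wedge p|^2 + |Q\wedge P|^2 + 2\langle q\wedge p, Q\wedge P\rangle$ and showing the cross term cannot cause full cancellation under the rank-4 constraint.

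Finally I would assemble the estimate. For a colliding pair with $|q| = d_{12} \to 0$, the rank-4 condition yields $|q\wedge p| \geq \delta > 0$ for some constant depending only on $L$ (since the other bivector $Q\wedge P$ lives in a transverse plane and cannot absorb all of $L$), whence $|p| \geq \delta/|q|$ and therefore $T \geq |p|^2/(2\mu) \geq \delta^2/(2\mu |q|^2)$. Comparing $\delta^2/(2\mu|q|^2)$ against $m_1 m_2/|q|$ as $|q|\to 0$, the quadratic blow-up of $T$ dominates the linear blow-up of $|V|$, so $H \to +\infty$ near that collision; the analogous argument handles each pair. Away from collisions $V$ is bounded below and $T \geq 0$, so $H$ is bounded below there trivially. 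Combining the collision and non-collision regimes gives a uniform lower bound on the whole submanifold, which is the assertion of Proposition~\ref{pro:bounded}.
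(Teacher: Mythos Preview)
The paper does not prove Proposition~\ref{pro:bounded} here; it only cites \cite{AD19} for a short proof, so there is no in-paper argument to compare against line by line. Your overall strategy --- control each potentially collapsing pair by showing that the fixed rank-$4$ angular momentum forces the kinetic energy of that pair to blow up like $1/d_{ij}^{2}$, which beats the $1/d_{ij}$ singularity of $V$ --- is the right one and is how such bounds are obtained.

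The genuine gap is the justification of the key inequality $|q\wedge p|\ge\delta>0$ with $\delta$ depending only on $L$. Neither of your two proposed routes works. Expanding $|L|^{2}=|q\wedge p|^{2}+|Q\wedge P|^{2}+2\langle q\wedge p,Q\wedge P\rangle$ uses only the norm of $L$, and the norm alone does not prevent $|q\wedge p|$ from being arbitrarily small: take $Q\wedge P$ close to $L$ in norm and adjust the inner product. Nor does $Q\wedge P$ ``live in a transverse plane''; the decomposition $L=q\wedge p+Q\wedge P$ into simple bivectors is highly non-unique, and $Q\wedge P$ can be any simple bivector whatsoever. What is true, and what you need, is that $Q\wedge P$ always has rank at most $2$, so $|q\wedge p|=|L-Q\wedge P|$ is at least the distance from $L$ to the cone $\mathcal{C}$ of rank-$2$ bivectors, and that distance equals $l_{1}$. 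This is exactly Lemma~\ref{lem:below} of the present paper, and its proof (via the Gauss map of $\mathcal{C}$, Lemma~\ref{lem:Gauss}) is the ingredient missing from your sketch. Once $|q\wedge p|\ge l_{1}$ is available for \emph{every} choice of Jacobi pair, your endgame is essentially correct, though the ``combining regimes'' step should be made explicit to handle simultaneous binary collisions: since $T\ge l_{1}^{2}/(2\mu_{ij}d_{ij}^{2})$ holds for each pair $(i,j)$, one gets $3T\ge\sum_{i<j}l_{1}^{2}/(2\mu_{ij}d_{ij}^{2})$, and term-by-term comparison with $3V$ then yields a uniform lower bound on $H$.
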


We will now prove a statement which constrains the realization of the infimum of $H$.

%\goodbreak

\begin{proposition}\label{pro:inf} 
Consider an unbounded sequence of states of given angular momentum $L$ of rank 4. If the energy converges to the infimum $H_{\inf}$ of the energy for this $L$, then there is an $(i,j)$, $1\leq i<j\leq 3$ and an $(a,b,A,B)\in E$ with $L=a\wedge b+A\wedge B$ such that $H_{\inf}=H_{\inf}^\infty$ with
$$H_{\inf}^\infty=-\frac{(m_im_j)^3}{2|a\wedge b|^2(m_i+m_j)}.$$
\end{proposition}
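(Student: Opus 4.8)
The plan is to extract from the infimizing sequence a limiting two-body circular motion for one pair, to bound the energy from below by the corresponding circular-orbit value, and then to match that value by an explicit family of states. Write the states as $(q,p,Q,P)$ in Jacobi coordinates and pass to a subsequence along which each mutual distance $d_{ij}$ either converges or tends to $+\infty$. Proposition~\ref{pro:bounded} guarantees $H_{\inf}$ is finite. Since the sequence is unbounded and the centre of mass is fixed, not all three distances stay bounded; and by the triangle inequality two bounded distances would force the third to be bounded, so at most one distance stays bounded. Hence either all three diverge, or exactly one pair $\{i,j\}$ keeps $d_{ij}$ bounded while $d_{ik},d_{jk}\to\infty$. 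If all three diverge then $V\to 0$, so $\liminf H\ge 0$, contradicting $H_{\inf}<0$ (which already follows from the explicit states constructed below). Thus after relabelling we take the inner Jacobi vector $q$ with $|q|=d_{ij}$ bounded and $Q$ pointing to the escaping body with $|Q|\to\infty$; the cross terms $-m_im_k/d_{ik}-m_jm_k/d_{jk}$ tend to $0$.

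The core estimate is a lower bound on the inner two-body energy. Using the length formula $|q\wedge p|^2=|q|^2|p|^2-\langle q,p\rangle^2$ we get $|p|^2\ge \ell_q^2/d_{ij}^2$ with $\ell_q=|q\wedge p|$, so the inner energy satisfies $|p|^2/(2\mu)-m_im_j/d_{ij}\ge \ell_q^2/(2\mu\,d_{ij}^2)-m_im_j/d_{ij}$. Minimising the right-hand side over $d_{ij}>0$ yields exactly $-(m_im_j)^3/(2\ell_q^2(m_i+m_j))$. Discarding the nonnegative term $|P|^2/(2\nu)$ and the vanishing cross terms, we obtain $H\ge -(m_im_j)^3/(2\ell_q^2(m_i+m_j))+o(1)$ along the sequence.

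It remains to control $\ell_q$, and this is the step I expect to be the crux. Both $q\wedge p$ and $Q\wedge P$ are simple bivectors, the variety of simple bivectors is closed (cut out by $\pi\wedge\pi=0$), and $L=q\wedge p+Q\wedge P$ has rank $4$, hence is not simple. Therefore $q\wedge p$ cannot tend to $0$: otherwise $Q\wedge P\to L$ would force $L$ to be simple. This is precisely where the rank-$4$ hypothesis enters. On the other side, if $\ell_q\to\infty$ the displayed bound gives $\liminf H\ge 0$, again contradicting $H_{\inf}<0$. Hence $\ell_q$ stays in a compact subinterval of $(0,\infty)$, and passing to a further subsequence we may assume $q\wedge p\to a\wedge b$ with $|a\wedge b|=\ell_\infty>0$; then $Q\wedge P\to A\wedge B:=L-a\wedge b$, simple as a limit of simple bivectors. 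Taking limits in the displayed inequality gives $H_{\inf}\ge H_{\inf}^\infty$ for this decomposition.

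Finally I would prove the reverse inequality by an explicit construction witnessing $H_{\inf}\le H_{\inf}^\infty$. Place the pair $\{i,j\}$ in the exact circular state of angular momentum $a\wedge b$ (inner energy equal to $H_{\inf}^\infty$), and send the third body off along $Q=t\,A$, $P=B/t$, so that $Q\wedge P=A\wedge B$ and the total angular momentum is $a\wedge b+A\wedge B=L$ for every $t$. As $t\to\infty$ the distances $d_{ik},d_{jk}\to\infty$, the cross terms vanish and $|P|^2/(2\nu)\to 0$, so the energy tends to $H_{\inf}^\infty$; this also certifies $H_{\inf}<0$ as used above. Combining the two inequalities yields $H_{\inf}=H_{\inf}^\infty$. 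The only delicate points are the bivector-geometry step isolating $\ell_\infty>0$ and the verification that one and the same decomposition serves in both inequalities.
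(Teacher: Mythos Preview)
Your proof is correct and reaches the same conclusion through the same overall architecture as the paper (isolate the escaping body, reduce to the circular two-body minimum, realize the value by an explicit family), but the technical packaging differs in two places worth noting. First, the paper opens by bounding the \emph{velocities} along the sequence via the scaling $(q,p,Q,P)\mapsto(2q,p/2,2Q,P/2)$, which preserves $L$ and would send $T/4+V/2$ to $-\infty$ if $T\to\infty$; it then invokes a general cluster lemma to get convergence of $(q,p)$ themselves. You bypass this entirely by working only with the bivector $q\wedge p$: boundedness above comes from $H_{\inf}<0$, and nonvanishing comes directly from the rank-$4$ hypothesis (if $q\wedge p\to 0$ then $Q\wedge P\to L$ would be simple). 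This is a cleaner use of the rank-$4$ assumption and avoids the scaling trick. Second, where the paper \emph{modifies} each state (deleting the radial component of $P$, then putting the binary on a circular orbit) to produce a new sequence with the same $L$ and lower energy tending to $H_{\inf}^\infty$, you instead prove the pointwise inequality $|p|^2/(2\mu)-m_im_j/d_{ij}\ge -(m_im_j)^3/(2\ell_q^2(m_i+m_j))$ and then exhibit the matching family $Q=tA$, $P=B/t$. These are two faces of the same coin: the paper's modified states are exactly the witnesses for your reverse inequality, and your inequality is exactly the statement that the modifications lower the energy. One small remark: your forward reference to $H_{\inf}<0$ is not circular, since the construction in your last paragraph works for \emph{any} decomposition $L=a\wedge b+A\wedge B$ (e.g.\ the eigenplane decomposition), independently of the sequence.
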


\begin{proof} We first prove that the velocities are bounded along the unbounded sequence. Suppose the contrary, i.e., $T\to +\infty$ along a subsequence. Then $T\sim H_{\inf}-V\sim -V$. Apply the map $(q,p,Q,P)\mapsto (2q,p/2,2Q,P/2)$, which preserves $L$. Then, on the image of the subsequence, the energy $T/4+V/2 \sim V/4$ tends to $-\infty$, which contradicts the finite infimum $H_{\inf}$. So, the velocities are bounded and the configuration is unbounded. As $H_{\inf}<0$, according to an easy general lemma (\cite{albouy1993integral}, Lemma 1), we can extract a subsequence such that two positions $q_1$ and $q_2$ (up to renumbering) tend to a limit with respect to their center of mass, and their velocities relative to this center of mass tend to a limit. The distance $|Q|$ of the third body $q_3$ tends to infinity.
Consider any state of the subsequence. There are two easy ways to produce another state with lower energy and same angular momentum.

\begin{enumerate}
\item[(i)]
The momentum $P$ contributes to the angular momentum. Since the contribution is $Q\wedge P$, the component of $P$ along $Q$ does not contribute to the angular momentum. We decrease this component. The energy decreases. We do this for each state of the subsequence. As the angular momentum is finite and $Q$ is infinite, we get a new sequence of states where the velocity $P$ tends to zero. Then the third body does not contribute to the energy in the limit.
\item[(ii)]
Standard considerations about the 2-body problem show that without changing the contribution $q\wedge p$ to the angular momentum, we can decrease the energy by choosing a $(p,q)$ generating a circular motion of the binary. Then the energy is $$H=-\frac{(m_1m_2)^3}{2|q\wedge p|^2(m_1+m_2)}.$$ Taking into account the renumbering, this is $H_{\inf}^\infty$ with $a=q$ and $b=p$.
\end{enumerate}
\end{proof}

We will get a simple lower bound for the factor $|a\wedge b|$ in the denominator of $H_{\inf}^\infty$.

\begin{lemma}\label{lem:Gauss} 
Let  ${\cal C}\subset\bigwedge^2 E$ be the 5-dimensional cone whose points are the bivectors of rank 2. The Gauss map sends ${\cal C}$ into itself.
\end{lemma}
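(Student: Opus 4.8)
The plan is to realize ${\cal C}$ as a quadric hypersurface and to identify its Gauss map explicitly with the Hodge star operator. Since $\dim\bigwedge^2 E=6$ while ${\cal C}$ is $5$-dimensional, ${\cal C}$ is a hypersurface, and the cleanest description is as a zero locus: a bivector $\omega$ has rank $\leq 2$ (equivalently, is decomposable) if and only if the quadratic map $N(\omega)=\omega\wedge\omega\in\bigwedge^4 E\cong\R$ vanishes, since $N(\omega)$ is, up to a constant, the Pfaffian whose square is $\det\omega$. Thus ${\cal C}$ is the smooth part of the quadric cone $N^{-1}(0)$, singular only at the vertex $\omega=0$.

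First I would rewrite $N$ through the Euclidean structure. Fixing a volume form, the Hodge star $*:\bigwedge^2 E\to\bigwedge^2 E$ satisfies $\alpha\wedge\beta=\langle *\alpha,\beta\rangle\,\mathrm{vol}$ and, in dimension four, $*^2=\mathrm{id}$ on bivectors with $*$ self-adjoint. Hence $N(\omega)=\langle *\omega,\omega\rangle\,\mathrm{vol}$, a quadratic form whose associated symmetric endomorphism is exactly $*$. Its gradient at $\omega$ is therefore $2*\omega$, so the normal line to ${\cal C}$ at a smooth point $\omega$ is spanned by $*\omega$. This identifies the \emph{Gauss map}—which sends a smooth point $\omega$ to its normal direction, viewed again as a ray in $\bigwedge^2 E$—with the map $\omega\mapsto *\omega$, up to positive scaling.

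The conclusion is then immediate. For $\omega\in{\cal C}$ I compute $N(*\omega)=\langle **\omega,*\omega\rangle\,\mathrm{vol}=\langle\omega,*\omega\rangle\,\mathrm{vol}=N(\omega)=0$, using $*^2=\mathrm{id}$ and the symmetry of the inner product. Thus $*\omega$ again satisfies the quadric condition, i.e. $*\omega\in{\cal C}$, so the Gauss map sends ${\cal C}$ into itself. Geometrically this records the familiar fact that $*$ carries the bivector of an oriented $2$-plane to a multiple of the bivector of its orthogonal complement, and the orthogonal complement of a $2$-plane in $\R^4$ is once more a $2$-plane.

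The main obstacle I anticipate is the bookkeeping of the second step: pinning down the sign and normalization conventions so that the gradient of $N$ really is $*\omega$ and not some other natural map, and fixing the convention that the Gauss map of a cone is the normal-direction map valued in the same space $\bigwedge^2 E$. Once the identification of the Gauss map with the Hodge star is secured, the invariance $N(*\omega)=N(\omega)$ makes the rest routine.
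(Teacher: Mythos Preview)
Your argument is correct and takes a genuinely different route from the paper. The paper proceeds by direct computation: given $\pi=f_1\wedge f_2$ in an orthonormal frame $(f_1,f_2,f_3,f_4)$, it reads off the tangent space of ${\cal C}$ at $\pi$ as the span of $f_1\wedge f_2,\,f_1\wedge f_3,\,f_1\wedge f_4,\,f_2\wedge f_3,\,f_2\wedge f_4$, and concludes that the normal is proportional to $f_3\wedge f_4$, hence of rank~2. Your approach is more structural: you recognize ${\cal C}$ as the smooth locus of the Pl\"ucker quadric $\omega\wedge\omega=0$, rewrite this via the Hodge star as $\langle *\omega,\omega\rangle=0$, and identify the Gauss map with $*$ itself; the invariance then follows from $*^2=\mathrm{id}$. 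The two arguments are secretly the same computation, since $*(f_1\wedge f_2)=f_3\wedge f_4$, but yours explains \emph{why} the Gauss map preserves ${\cal C}$ (it is an isometric involution of $\bigwedge^2 E$ preserving the quadric) rather than merely verifying it in coordinates. The paper's version is more self-contained and avoids any Hodge-star bookkeeping; yours buys a cleaner conceptual picture and makes the involutive nature of the Gauss map transparent. One small point you left implicit: to land in ${\cal C}$ rather than its closure you need $*\omega\neq 0$, which is immediate since $*$ is invertible. Your worry about sign conventions is harmless here, since only the normal \emph{line} matters and the conclusion $N(*\omega)=0$ survives any sign in $*^2=\pm\mathrm{id}$.
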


\begin{proof} Note that $0$ is excluded from ${\cal C}$, being of rank 0. The points of ${\cal C}$ are non-singular.  Let the bivector $\eta$ be the image of $\pi\in{\cal C}$ by the Gauss map: $\eta$ is orthogonal to the tangent plane of ${\cal C}$ at $\pi$. There is an orthogonal frame $(f_1,f_2,f_3,f_4)$ of $E$ such that $\pi=f_1\wedge f_2$. By varying $f_1$ and $f_2$ arbitrarily, we see that the tangent plane is generated by $f_1\wedge f_2$, $f_1\wedge f_3$, $f_1\wedge f_4$, $f_2\wedge f_3$, $f_2\wedge f_4$. Then $\eta$ is proportional to $f_3\wedge f_4$, since $\langle f_3\wedge f_4, f_i\wedge f_j\rangle=\langle f_3,f_i\rangle \langle f_4,f_j\rangle-\langle f_4,f_i\rangle \langle f_3,f_j\rangle=0$ if $(i,j)\neq (3,4)$. Consequently $\eta$ is of rank 2.
\end{proof}

The next Lemma is a standard particular case of a statement by Weyl (\cite{weyl1912asymptotische}) about Hermitian matrices, presented and extended in \cite{fulton2000eigenvalues}. The particular case being simple, it deserves a short proof. We deduce it from Lemma~\ref{lem:Gauss}, which improves an argument sketched in \cite{AD19}, page 331.

\begin{lemma}\label{lem:below} 
If a bivector $L=a\wedge b+A\wedge B$ of rank 4 has eigenvalues $\pm i l_1$ and $\pm i l_2$ with $0<l_1\leq l_2$, then $l_1\leq |a\wedge b|$.
\end{lemma}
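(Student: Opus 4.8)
The plan is to pass to the Hodge decomposition of $\bigwedge^2 E$ determined by an orientation of $E=\R^4$. Writing $\star$ for the Hodge star, the self-adjoint involution $\star$ splits $\bigwedge^2 E=\bigwedge^2_+ E\oplus\bigwedge^2_- E$ into its $\pm1$ eigenspaces, each of dimension $3$ and orthogonal to the other; I write $\pi=\pi_++\pi_-$ accordingly. The quadratic form $Q(\pi)=\langle\pi,\star\pi\rangle=|\pi_+|^2-|\pi_-|^2$ has signature $(3,3)$, and Lemma~\ref{lem:Gauss} is exactly the statement that its null cone is the cone $\mathcal C$ of simple bivectors and is $\star$-invariant. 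The single fact I extract is the \emph{balanced-halves property}: since $Q(a\wedge b)=(a\wedge b)\wedge(a\wedge b)=0$, every simple bivector $\pi$ satisfies $|\pi_+|=|\pi_-|=|\pi|/\sqrt2$.

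First I would record the two Hodge norms of $L$ in terms of its eigenvalue moduli. Choosing an orthonormal frame with $L=\lambda_1 e_1\wedge e_2+\lambda_2 e_3\wedge e_4$, where $|\lambda_1|=l_1$, $|\lambda_2|=l_2$, a direct computation gives $|L|^2=l_1^2+l_2^2$ and $Q(L)=2\lambda_1\lambda_2=\pm2l_1l_2$, whence $\{|L_+|,|L_-|\}=\{(l_1+l_2)/\sqrt2,\ (l_2-l_1)/\sqrt2\}$. After reversing the orientation if necessary — which merely swaps the two eigenspaces and leaves both the hypotheses and the quantity $|a\wedge b|$ untouched — I may assume
$$|L_+|=\frac{l_1+l_2}{\sqrt2},\qquad |L_-|=\frac{l_2-l_1}{\sqrt2}.$$

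Now set $s=|a\wedge b|$ and $t=|A\wedge B|$, so the balanced-halves property gives $|(a\wedge b)_\pm|=s/\sqrt2$ and $|(A\wedge B)_\pm|=t/\sqrt2$. Projecting $L=a\wedge b+A\wedge B$ onto the two eigenspaces and applying the ordinary and the reverse triangle inequalities,
$$|L_+|\le\frac{s+t}{\sqrt2},\qquad |L_-|\ge\big||(a\wedge b)_-|-|(A\wedge B)_-|\big|=\frac{|s-t|}{\sqrt2},$$
which translate into $l_1+l_2\le s+t$ and $l_2-l_1\ge|s-t|$. Subtracting the second from the first gives $2l_1\le(s+t)-|s-t|=2\min(s,t)$, so $l_1\le\min(s,t)\le s=|a\wedge b|$, which is the claim (indeed slightly more).

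The conceptual heart — and the one step one must not skip — is the passage from simplicity of $a\wedge b$ and $A\wedge B$ to the equality of the norms of their Hodge halves, since that is what makes the reverse triangle inequality on the anti-self-dual part bite. A naive Cauchy–Schwarz estimate on the relation $\langle a\wedge b,\star L\rangle=\lambda_1\lambda_2$ (obtained by expanding $Q(L-a\wedge b)=0$) only yields $|a\wedge b|\ge l_1l_2/\sqrt{l_1^2+l_2^2}$, which is weaker than $l_1$; the gain comes precisely from knowing that simple bivectors lie on the null cone of $Q$, so their two halves cannot be lopsided. This is exactly the structural content isolated by Lemma~\ref{lem:Gauss}, and it is the input the argument genuinely relies on.
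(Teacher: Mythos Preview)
Your proof is correct and takes a genuinely different route from the paper. The paper's argument is geometric: it shows that the Euclidean distance $d_L$ from $L$ to the cone $\mathcal C$ equals $l_1$, by using Lemma~\ref{lem:Gauss} to infer that the nearest point $\pi\in\mathcal C$ yields an orthogonal splitting $L=\pi+\eta$ into two simple bivectors, which must then coincide with the eigendecomposition; the conclusion follows from $|a\wedge b|=|L-A\wedge B|\ge d_L=l_1$ since $A\wedge B\in\mathcal C$. You instead project onto the self-dual and anti-self-dual subspaces and combine a forward and a reverse triangle inequality. One point in your commentary deserves correction: despite the closing paragraph, the only structural input your argument actually uses is the Pl\"ucker relation $Q(a\wedge b)=0$, i.e.\ that simple bivectors lie on the null cone of $Q$. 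Lemma~\ref{lem:Gauss} is strictly stronger --- via $\nabla Q=2\star$ it is equivalent to the $\star$-invariance of $\mathcal C$, information about the \emph{normal} to the cone rather than the cone itself --- and your proof never invokes it. So your approach is more elementary than you give it credit for, and you may drop the appeal to Lemma~\ref{lem:Gauss} altogether. What the paper's approach buys in exchange is the clean geometric identification $l_1=\operatorname{dist}(L,\mathcal C)$, which has independent interest.
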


\begin{proof} Let $d_L$ be the distance from $L=l_1 e_1\wedge e_2+l_2 e_3\wedge e_4$ to the cone ${\cal C}$ of bivectors of rank 2. Let $\pi$ be the point of ${\cal C}$ such that $|L-\pi|=d_L$. The possibility $\pi=0$ is excluded since we would have $d_L=|L|=\sqrt{l_1^2+l_2^2}$, while $\pi=l_2e_3\wedge e_4$ is at the lower distance $l_1$. As $\pi\neq 0$, $L-\pi=\eta$ is orthogonal to the tangent plane of ${\cal C}$ at $\pi$. According to Lemma~\ref{lem:Gauss}, $\eta$ is of rank 2. The decompositions $L=\pi+\eta$ and $L=l_1 e_1\wedge e_2+l_2 e_3\wedge e_4$ coincide if $l_1\neq l_2$, by uniqueness of the eigenplanes. We deduce, even if $l_1=l_2$, that $d_L=l_1$. If now $L=a\wedge b+A\wedge B$, as $A\wedge B\in {\cal C}$, we have $|a\wedge b|\geq d_L$.
\end{proof}

\begin{proposition}\label{pro:sixvalues} 
For any $(i,j)$, $1\leq i<j\leq 3$, for $k=1$ or $2$, there is an unbounded sequence $s_{n}$ of states of given angular momentum $L=l_1 e_1\wedge e_2+l_2 e_3\wedge e_4$, $l_1l_2\neq 0$ such that the energy $H(s_n)$ of $s_n$ satisfies
$$\lim_{n\to\infty} H(s_{n})=H_{ijk}\quad\hbox{and}\quad H(s_{n})<H_{ijk},\quad \hbox{where } H_{ijk}=-\frac{(m_im_j)^3}{2l_k^2(m_i+m_j)}.$$ 
\end{proposition}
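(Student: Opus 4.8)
The plan is to construct the sequences explicitly, realizing the model that appears in the proof of Proposition~\ref{pro:inf}: a fixed circular binary formed by bodies $i$ and $j$, together with the remaining body receding to infinity with a velocity tending to $0$. Work in Jacobi coordinates adapted to the pair $(i,j)$, with reduced mass $\mu_{ij}=m_im_j/(m_i+m_j)$ and outer coefficient $\nu$. First I would record the elementary Kepler computation. A circular relative orbit of radius $r$ satisfies $\mu_{ij}\omega^2 r=m_im_j/r^2$, so writing $\ell=|q\wedge p|=\mu_{ij}\omega r^2$ for the magnitude of its angular momentum, its internal energy is $-m_im_j/(2r)$. Eliminating $r$ and $\omega$ gives $r=\ell^2/(\mu_{ij}m_im_j)$ and internal energy $-(m_im_j)^3/(2\ell^2(m_i+m_j))$, which equals $H_{ijk}$ precisely when $\ell=l_k$. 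This is the value the binary will contribute.

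For the state $s_n$, fix such a circular binary lying in the plane $\langle e_{2k-1},e_{2k}\rangle$ with $q\wedge p=l_k\,e_{2k-1}\wedge e_{2k}$; this part is independent of $n$. Let $c$ be the remaining index and $\{k'\}=\{1,2\}\setminus\{k\}$, and place the third body in the orthogonal plane by $Q=R_n\,e_{2k'-1}$ and $P=(l_{k'}/R_n)\,e_{2k'}$, with $R_n\to\infty$. Then $Q\wedge P=l_{k'}\,e_{2k'-1}\wedge e_{2k'}$, so the total angular momentum is exactly $l_1 e_1\wedge e_2+l_2 e_3\wedge e_4=L$, and the sequence is unbounded since $|Q|=R_n\to\infty$. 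Both $l_1,l_2$ are nonzero because $L$ has rank $4$, so the construction makes sense for either choice of $k$.

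It remains to estimate the energy. Since the binary contributes exactly $H_{ijk}$ and the binary–third-body interaction is the only remaining term, the total energy is
\[
H(s_n)=H_{ijk}+\frac{|P|^2}{2\nu}-\frac{m_im_c}{d_{ic}}-\frac{m_jm_c}{d_{jc}},
\]
where $d_{ic},d_{jc}$ are the distances from the third body to bodies $i,j$. The binary has fixed size, so $d_{ic},d_{jc}=R_n+O(1)$ and the two attractive terms are of order $R_n^{-1}$, while the kinetic term $|P|^2/(2\nu)=l_{k'}^2/(2\nu R_n^2)$ is only of order $R_n^{-2}$. Hence for $n$ large the negative potential dominates, giving $H(s_n)<H_{ijk}$, while all three correction terms tend to $0$, so $H(s_n)\to H_{ijk}$. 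This produces the claimed sequence for each of the six pairs $((i,j),k)$.

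I expect the only delicate point to be this final estimate, namely ensuring that the approach is genuinely from below. The balance is favorable precisely because holding a fixed nonzero angular momentum $l_{k'}$ with a receding body forces $|P|\sim R_n^{-1}$, so the kinetic energy decays one order faster in $R_n$ than the Newtonian attraction; the sign of the correction is therefore negative for all large $n$. Everything else is a direct verification that the constructed phase point has angular momentum $L$ and that its binary part realizes the value $H_{ijk}$.
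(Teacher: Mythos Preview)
Your proof is correct and follows essentially the same approach as the paper: construct an explicit sequence with a circular binary carrying $l_k$ in one eigenplane and the third body receding in the orthogonal eigenplane while carrying $l_{k'}$, then observe that the attractive $O(R_n^{-1})$ correction dominates the kinetic $O(R_n^{-2})$ correction. The only cosmetic difference is that the paper writes the individual positions $q_1,q_2,q_3$ in the plane $Oe_1e_3$ (noting the inertia axes align so no angular-momentum cross terms arise), whereas you work directly in Jacobi coordinates, which makes the decomposition $L=q\wedge p+Q\wedge P$ immediate.
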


\begin{proof} We take for example $(i,j,k)=(1,2,1)$. We choose the following triangle in the plane $\O e_1e_3$, with center of mass at $\O=(0,0)$, and with inertia tensor with axes $\O e_1$ and $\O e_3$:
$$q_1=(\alpha m_2,-\beta_n m_3),\quad 
q_2=(-\alpha m_1,-\beta_n m_3),\quad 
q_3=\bigl(0,\beta_n (m_1+m_2)\bigr).$$
We fix $\alpha>0$ such that the binary $(q_1,q_2)$ has the size of the circular motion of angular momentum $l_1$ and let $\beta_n\to +\infty$.

The velocities $(\dot q_1,\dot q_2,\dot q_3)$ are in the plane $\O e_2 e_4$ orthogonal to the plane of the triangle. We define them as the sum of two components: one component gives an angular momentum $l_2$ to the ``large binary'' consisting of $q_3$ and the center of mass of $q_1$ and $q_2$. The other is fixed and gives to the binary $q_1$, $q_2$ the unique circular motion of angular momentum $l_1$. The total angular momentum is $L=l_1 e_1\wedge e_2+l_2 e_3\wedge e_4$. There is no other component since the rotations are around the axes of inertia.

As $n\to +\infty$, the limit of the energy is $H_{ijk}$. While approaching the limit, we see two contributions tending to zero: a negative contribution $-m_1m_3/d_{13}-m_2m_3/d_{23}$ to the potential energy, a positive contribution to the kinetic energy, proportional to $|\dot q_3|^2$. The negative contribution is dominant, since the
positive contribution is as $1/d_{13}^2$, the angular momentum being finite. So the limit is approached from below.
\end{proof}

\begin{proposition}\label{pro:realized} 
If along a sequence on the submanifold defined in Proposition~\ref{pro:bounded} the energy function converges to its infimum $H_{\inf}$, then the sequence is bounded.
\end{proposition}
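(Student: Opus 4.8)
The plan is to argue by contradiction, assembling the three preceding results into a single squeeze on $H_{\inf}$. Suppose the sequence were unbounded while its energy tends to $H_{\inf}$. Then the hypotheses of Proposition~\ref{pro:inf} are met, so that proposition furnishes a pair $(i,j)$, $1\le i<j\le 3$, and a decomposition $L=a\wedge b+A\wedge B$ of the rank-4 angular momentum for which $H_{\inf}=H_{\inf}^\infty=-\frac{(m_im_j)^3}{2|a\wedge b|^2(m_i+m_j)}$.

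First I would bound $H_{\inf}$ from below. Writing $\pm i l_1,\pm i l_2$ for the eigenvalues of $L$ with $0<l_1\le l_2$, Lemma~\ref{lem:below} gives $l_1\le|a\wedge b|$, hence $|a\wedge b|^2\ge l_1^2$. Since $x\mapsto -c/x$ is increasing on $x>0$ for $c>0$, this turns the equality for $H_{\inf}$ above into $H_{\inf}\ge -\frac{(m_im_j)^3}{2l_1^2(m_i+m_j)}=H_{ij1}$, where $H_{ij1}$ is the value appearing in Proposition~\ref{pro:sixvalues} for this same $(i,j)$ with $k=1$. It is essential to use the smaller eigenvalue $l_1$ here: Lemma~\ref{lem:below} bounds precisely $l_1$ from below by $|a\wedge b|$ and gives no comparison with $l_2$, so the only threshold we may invoke is $H_{ij1}$.

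Next I would bound $H_{\inf}$ strictly from above using the same $(i,j)$ and the branch $k=1$. Proposition~\ref{pro:sixvalues} produces an unbounded sequence $s_n$ of states of angular momentum $L$ with $H(s_n)<H_{ij1}$. These are admissible points on the submanifold of Proposition~\ref{pro:bounded}, so the infimum obeys $H_{\inf}\le H(s_n)<H_{ij1}$, whence $H_{\inf}<H_{ij1}$. This contradicts the lower bound $H_{\inf}\ge H_{ij1}$ obtained above, and the contradiction forces the sequence to be bounded.

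The argument is brief because all of the analytic content is already carried by the earlier statements: Proposition~\ref{pro:inf} identifies the only possible infimal value at infinity, Lemma~\ref{lem:below} pins that value above the threshold $H_{ij1}$, and Proposition~\ref{pro:sixvalues} exhibits genuine states strictly below $H_{ij1}$. Consequently I expect no real obstacle beyond careful bookkeeping—matching the $l_1$ of Lemma~\ref{lem:below} to the $k=1$ branch of Proposition~\ref{pro:sixvalues} and checking that the weak lower bound and the strict upper bound on $H_{\inf}$ are genuinely incompatible.
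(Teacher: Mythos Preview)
Your argument is correct and is essentially identical to the paper's own proof: assume the sequence is unbounded, invoke Proposition~\ref{pro:inf} to identify $H_{\inf}=H_{\inf}^\infty$, use Lemma~\ref{lem:below} to get $H_{ij1}\le H_{\inf}^\infty$, and contradict this with the strict inequality $H(s_n)<H_{ij1}$ from Proposition~\ref{pro:sixvalues}. One small slip of phrasing in your closing paragraph: Lemma~\ref{lem:below} bounds $l_1$ from \emph{above} by $|a\wedge b|$ (equivalently, $|a\wedge b|$ from below by $l_1$), not the reverse; your computation earlier uses the correct direction.
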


\begin{proof}
Consider the infimum $H_{\inf}=H_{\inf}^\infty$ of the energy obtained in Proposition~\ref{pro:inf} by assuming that the sequence is unbounded. Lemma~\ref{lem:below} proves that $H_{ij1}\leq H_{\inf}^\infty$, where $H_{ij1}$ is defined in Proposition~\ref{pro:sixvalues}. But Proposition~\ref{pro:sixvalues} proves that there are states $s_n$ such that $H(s_n)<H_{ij1}$. So, $H_{\inf}^\infty$ is not the infimum. Contradiction.
\end{proof}

In other words, the value $H_{\inf}$ is the value of the energy at a minimum, and is not a limiting value at infinity. We may now confirm the following result, which we stated in \cite{AD19} as the first part of Theorem 9.

\begin{theorem}\label{th:final} 
There is an $\epsilon>0$ such that on the submanifold defined in Proposition~\ref{pro:bounded} the level sets of the energy function $H$ with value $H\in [H_{\inf}, H_{\inf}+\epsilon]$, where $H_{\inf}$ is the infimum of $H$, are nonempty and compact.
\end{theorem}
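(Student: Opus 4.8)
The plan is to combine Proposition~\ref{pro:realized} with a scaling estimate that controls the kinetic energy, so that the only possible mechanism for non-compactness of a low-energy shell is escape of the configuration to infinity, which Proposition~\ref{pro:realized} forbids near $H_{\inf}$. First I would record two uniform bounds on the sublevel set $S=\{H\le H_{\inf}+1\}$. For the kinetic energy I reuse the scaling $(q,p,Q,P)\mapsto(2q,p/2,2Q,P/2)$, which preserves $L$ and sends $H=T+V$ to $T/4+V/2$. If a state in $S$ had very large $T$, then $V=H-T$ would be very negative, and the scaled state would have energy $T/4+V/2\le (H_{\inf}+1)/2-T/4<H_{\inf}$, contradicting that $H_{\inf}$ is the infimum for this $L$. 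Hence $T\le C$ on $S$ for some constant $C$, and therefore $V=H-T\ge H_{\inf}-C$ is bounded below, which forces each $d_{ij}$ to stay bounded away from $0$. Thus on all of $S$ the velocities are bounded and no collision is approached.

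Next I would realize the infimum. Any minimizing sequence eventually lies in $S$, hence has bounded velocities, bounded-below distances, and, by Proposition~\ref{pro:realized}, bounded configuration; it therefore stays in a compact subset of the phase space and has a convergent subsequence whose limit attains $H_{\inf}$. So $H^{-1}(H_{\inf})$ is nonempty and $H_{\inf}$ is a genuine minimum.

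The core step is compactness of the shell $K_\epsilon=H^{-1}([H_{\inf},H_{\inf}+\epsilon])$ for small $\epsilon$, which I would prove by contradiction. Suppose $K_{1/n}$ is non-compact for every $n$. Each $K_{1/n}$ is closed in the phase space and, being contained in $S$, has bounded velocities and is bounded away from collisions; a closed subset of this punctured phase space can then fail to be compact only by being unbounded in the configuration. So I may pick $s_n\in K_{1/n}$ with configuration size at least $n$. Then $H(s_n)\to H_{\inf}$ while $(s_n)$ is unbounded, contradicting Proposition~\ref{pro:realized}. Hence some $\epsilon>0$ makes $K_\epsilon$ bounded; together with the two uniform bounds from $S$, this exhibits $K_\epsilon$ as a closed, bounded subset of the phase space that avoids the collision set, that is, a compact set.

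Finally, nonemptiness of each individual level set $H^{-1}(c)$ with $c\in[H_{\inf},H_{\inf}+\epsilon]$ follows by sweeping: starting from a minimizer $s_\ast$ and increasing the component of $p$ parallel to $q$ leaves both $L$ and $V$ unchanged while raising $T$ continuously from $H_{\inf}$ to $+\infty$, so by the intermediate value theorem every value in the range is attained, and $H^{-1}(c)\subset K_\epsilon$ is closed in a compact set, hence compact. The main obstacle is not any single estimate but reading ``compact'' correctly in this punctured phase space: one must exclude escape both to infinity, handled by Proposition~\ref{pro:realized}, and to the collision set, handled by the scaling bound on $T$, and one must check that both exclusions are uniform on an entire shell $H\le H_{\inf}+\epsilon$ rather than merely along a single minimizing sequence.
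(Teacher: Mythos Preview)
The paper does not actually supply a proof of Theorem~\ref{th:final}: it states the theorem immediately after Proposition~\ref{pro:realized}, prefaced only by the sentence ``In other words, the value $H_{\inf}$ is the value of the energy at a minimum, and is not a limiting value at infinity. We may now confirm the following result\ldots'' and treats it as a direct consequence. Your argument is correct and fills in precisely the details the paper leaves implicit: the uniform bound on $T$ via the $L$-preserving scaling (reused from the proof of Proposition~\ref{pro:inf}), the resulting lower bound on $V$ that keeps the $d_{ij}$ away from zero, the contradiction with Proposition~\ref{pro:realized} if every shell $K_{1/n}$ were non-compact, and the intermediate-value construction for nonemptiness by pushing $p$ along $q$. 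This is the natural completion of the paper's sketch rather than a different route.

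One small remark on your compactness-by-contradiction step: you assert that a closed subset of $S$ with bounded velocities and distances bounded away from collision can fail to be compact only through unboundedness of the configuration. This is true, but it relies on the fixed-$L$ submanifold being closed in the ambient (collision-free) phase space, so that closed and bounded there implies compact; you might want to say this explicitly. Otherwise the argument is complete.
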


The second part of Theorem 9 in \cite{AD19} states that the minima of $H$ on the submanifold are, after reduction of the symmetry, isolated relative equilibria, which are consequently Lyapunov stable. There we used a lemma about the balanced configurations to exclude the possibility of a continuum of relative equilibria.

section{Critical values}

\begin{figure} 
\includegraphics[width=8.5cm]{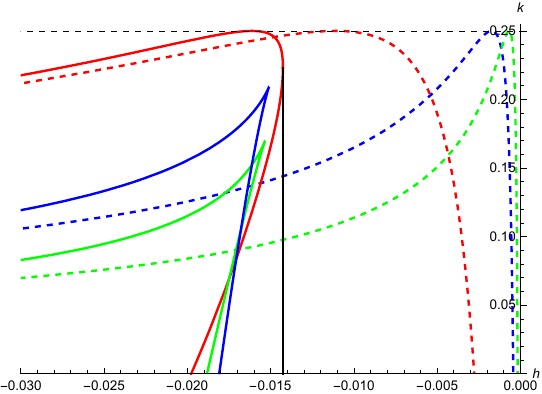} \hspace*{0.5cm} % created in Balanced2022.nb
\caption{Critical values of the scaled momentum versus scaled energy (as in \cite{AD19}) for finite relative equilibria (solid red, green, blue, black) and for critical points at infinity (dashed). Masses $m_1 = 1/2, m_2 = 1/3, m_3 = 1/6$.}
\label{fig:hkl}
\end{figure}

% {\bf Remark.} 
In \cite{AD19} we found the relation between energy and angular momentum for relative equilibria of 
the 3-body problem in 4D. It is natural to add to the energy-momentum diagram from \cite{AD19} the curves
for critical points at infinity. 
Let $H_{ij} = -m_i^3 m_j^3 / ( 2 ( m_i + m_j)) = H_{ijk} l_k^2$  
and consider the energy multiplied 
by the squared angular momentum to make it scaling invariant as in \cite{AD19}, hence
\[ 
  h = H_{ij} l_1^{-2}  ( l_1 + l_2)^2.
\]
% This has only two indices since it is a symmetric function of $l_1$ and $l_2$.
%
Let $h$ be a function of the dimensionless momentum $k = l_1 l_2 / ( l_1 + l_2)^2$.
Introducing the parameter $\chi = (1 + l_2/l_1)^{-1} \in (0, 1)$ we obtain the scaling invariant energy-momentum curves 
of the critical points at infinity as
\[
   (h,k) = ( H_{ij}  \chi^{-2}, \chi ( 1 - \chi) ) .
\]
If we insist that $l_1 \le l_2$ then $\chi \in (0, 1/2]$. Instead of normalising the other way we can consider $\chi \in (0,1)$.

For $\chi \to 1$ the 3D case is recovered, in which the three critical values at infinity are all larger 
than the critical values of the collinear Euler and the equilateral Lagrange solutions.
Considering Fig.~1 this ordering is not preserved for non-zero values of the angular momentum.
At $\chi = 1/2$ the curve of critical values is tangent to the maximum values $k = 1/4$.
For $\chi \le 1/2$ the curves of critical values of the infinite critical points may intersect those
of the finite critical points and the simple ordering of the 3D case is not preserved.
We now show that in the limit $\chi \to 0$ the critical values of the three infinite critical points 
are asymptotic to the critical values of the three families of finite critical points, which are balanced configurations.
In \cite{DS19} a relation between the limit of these balanced configurations and critical values at infinity of the 3D case
has already been noticed, and with the results of the present paper this is made precise.
Eliminate $\chi$ using $k = \chi ( 1 - \chi)$ and choose the branch where $\chi \to 0$.
Expanding $h = h(k)$ in the limit of small $k$ gives
$$
     h = -\frac{ 4 H_{ij}} { (\sqrt{ 1 - 4 k} - 1)^2 } = -\frac{ H_{ij} }{k^2} ( 1 - 2 k - k^2 + O(k^3) ). 
$$
In \cite{DS19} the asymptotics of the three families of balanced configurations has been computed, see page 393.
The series found there agrees in the first two terms so that the limiting value of 
$hk^2$ and its first derivative agree in the limit $k \to 0$. The sign of the difference in the 
2nd order term depends on the masses. In figure~1 solid and dashed lines with the same asymptotics
have the same colour.

\section{Acknowledgement}

The authors would like to thank the referee for suggestions that improved the exposition of the paper.

\appendix

\bibliographystyle{amsalpha}      % mathematics and physical sciences
%\bibliography{../../bib_cv/all,../../bib_cv/hd}{}   % name your BibTeX data base

\def\cprime{$'$}
\providecommand{\bysame}{\leavevmode\hbox to3em{\hrulefill}\thinspace}
\providecommand{\MR}{\relax\ifhmode\unskip\space\fi MR }
% \MRhref is called by the amsart/book/proc definition of \MR.
\providecommand{\MRhref}[2]{%
  \href{http://www.ams.org/mathscinet-getitem?mr=#1}{#2}
}
\providecommand{\href}[2]{#2}

\end{document}